\documentclass[11pt]{article}
\usepackage[margin=1in]{geometry}
\usepackage{amsmath,amssymb,amsthm,amsfonts}
\usepackage{cite}
\usepackage{hyperref}
\hypersetup{hidelinks}

\newtheorem{theorem}{Theorem}[section]
\newtheorem{lemma}[theorem]{Lemma}
\newtheorem{corollary}[theorem]{Corollary}
\newtheorem{proposition}[theorem]{Proposition}
\newtheorem{remark}[theorem]{Remark}
\newtheorem{example}[theorem]{Example}
\newtheorem{definition}[theorem]{Definition}

\title{Riccati Stability Without Auxiliary Matrices}
\author{Jeffrey Humpherys\\Department of Mathematics and Statistics\\Missouri University of Science and Technology\\Rolla, Missouri}
\date{\today}

\begin{document}
\maketitle

\begin{abstract}
In the 2004 collection \emph{Unsolved Problems in Mathematical Systems and Control Theory}, Erik Verriest posed the problem of characterizing Riccati stability ``without invoking additional matrices.''  We give such a characterization through a scalar invariant of the resolvent family.  The invariant is defined by covariance balances using at most $n^2+1$ frequency--direction pairs.  A finite-dimensional separation argument shows that this covariance radius equals the optimal common ellipsoidal norm of the resolvent family.  Combined with the strict bounded real lemma, this yields a necessary and sufficient condition for Riccati stability involving only the Hurwitz property of the first matrix and a single scalar inequality.  The invariant reduces to the ordinary spectral radius for one matrix, lies between the pointwise spectral-radius and unscaled small-gain levels of the resolvent family, and coincides with the operator-space spectral radius of Shalit and Shamovich for the natural resolvent function space.
\end{abstract}

\section{Introduction}

Time delays can enter control systems in various ways, e.g., transport, measurement, computation, and communication.  They are a classical source of instability.  A feedback loop that is stable in the absence of delay can be unstable.  Often there is a critical lag that causes the system to destabilize, and so stability criteria for delayed systems are often divided into two families.  Delay-dependent criteria establish stability for all delays below a given critical bound, while delay-independent criteria establish stability for every delay, at the price of conservatism when the delay is known to be small.  The delay-independent notion is the appropriate one when delays are unknown or time-varying.  This is the setting of this paper.

Consider the linear delay system
\begin{equation}
\label{eq:delay}
\dot x(t)=Ax(t)+Bx(t-\tau),
\end{equation}
with real $A,B\in\mathbb R^{n\times n}$ and delay $\tau\ge0$, together with the quadratic Lyapunov--Krasovskii functional
\begin{equation}
\label{eq:lk}
V(x_t)=x(t)^TPx(t)+\int_{t-\tau}^{t}x(s)^TQx(s)\,ds,
\qquad P,Q>0.
\end{equation}
Differentiating along solutions of \eqref{eq:delay} gives
\[
\frac{d}{dt}V
= x(t)^T\bigl(A^TP+PA+Q\bigr)x(t) +2\,x(t)^TPB\,x(t-\tau) -x(t-\tau)^TQ\,x(t-\tau),
\]
a quadratic form in the pair $(x(t),x(t-\tau))$.  The form is negative definite for all values of that pair exactly when
\begin{equation}
\label{eq:lmi}
\begin{bmatrix}
A^TP+PA+Q & PB\\
B^TP & -Q
\end{bmatrix}<0.
\end{equation}
Because negativity is required for all values of the pair, and not merely for those produced by a solution, the criterion makes no reference to $\tau$, and one pair $(P,Q)$ yields asymptotic stability of \eqref{eq:delay} for every delay $\tau \geq 0$.  The Schur complement in the second diagonal block converts \eqref{eq:lmi} to
\begin{equation}\label{eq:ric}
A^TP+PA+Q+PBQ^{-1}B^TP<0.
\end{equation}
Verriest \cite{Verriest2004} called the pair $(A,B)$ \emph{Riccati-stable} when real symmetric $P,Q>0$ satisfying \eqref{eq:ric} exist.

When $B=0$ the integral term in \eqref{eq:lk} is unnecessary; the functional reduces to $x^T Px$ and \eqref{eq:ric} reduces to the Lyapunov inequality $A^TP+PA<0$.  Lyapunov's theorem eliminates the auxiliary matrix from that condition.  The inequality is solvable exactly when every eigenvalue of $A$ lies in the open left half-plane, a statement that refers to $A$ alone, locates the boundary of the stable region, and replaces an existential condition over a cone of matrices with a spectral computation.  In the volume \emph{Unsolved Problems in Mathematical Systems and Control Theory} \cite{Verriest2004}, Verriest posed the corresponding question for the pair $(A,B)$, requesting a characterization of Riccati stability ``without invoking additional matrices'', as the Lyapunov theorem does for the $B=0$ case.  %The obstacle is not decidability, since for any given pair \eqref{eq:lmi} is a semidefinite feasibility problem.  The obstacle is that feasibility quantifies over the unknowns $P$ and $Q$ and produces no intrinsic invariant of $(A,B)$, no analogue of the spectrum, and no visible boundary for the set of stable pairs.

The frequency domain identifies the object on which such an invariant should be built.  Assume $A$ is Hurwitz and regard \eqref{eq:delay} as the delay-free plant $\dot x=Ax+Bu$ in feedback with the pure delay $u(t)=x(t-\tau)$.  At frequency $\omega$ the delay contributes the factor $e^{-i\omega\tau}$, which has unit modulus for every $\tau$, so each loop gain that can occur is a unimodular multiple of a member of the resolvent family
\begin{equation}
\label{eq:G}
G_{A,B}(\omega)=(i\omega I-A)^{-1}B,
\qquad \omega\in\mathbb R.
\end{equation}
Norms are unchanged by unimodular scalar factors, so this family carries the gain information of the loop uniformly in the delay, and the following lemma reduces the Riccati inequality to a scaled small-gain condition on it.

\begin{lemma}
\label{lem:brl}
The pair $(A,B)$ is Riccati-stable if and only if $A$ is Hurwitz and there exists a real symmetric $Q>0$ such that
\begin{equation}\label{eq:metric}
\sup_{\omega\in\mathbb R}
\left\|Q^{1/2}G_{A,B}(\omega)Q^{-1/2}\right\|_2<1.
\end{equation}
In other words, Riccati stability is equivalent to the existence of a single quadratic metric under which the entire family $G_{A,B}(\omega)$ is contractive.  
\end{lemma}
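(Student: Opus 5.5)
We reduce the Riccati inequality \eqref{eq:ric} to the strict bounded real lemma (KYP) for an auxiliary state-space system with input and output scaled by $Q$. Set $\tilde B=BQ^{-1/2}$ and $\tilde C=Q^{1/2}$. The transfer function of $(A,\tilde B,\tilde C)$ is
\[
\tilde C(i\omega I-A)^{-1}\tilde B=Q^{1/2}G_{A,B}(\omega)Q^{-1/2}.
\]
With these substitutions, \eqref{eq:ric} reads
\[
A^TP+PA+\tilde C^T\tilde C+P\tilde B\tilde B^TP<0,
\]
which is exactly the strict bounded real Riccati inequality for this system with unit gain level. The strict bounded real lemma states that, for a realization $(A,\tilde B,\tilde C)$, there exists $P>0$ satisfying this inequality if and only if $A$ is Hurwitz and $\sup_\omega\|\tilde C(i\omega I-A)^{-1}\tilde B\|_2<1$. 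We take this as the standard result the paper cites.

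\emph{Necessity.} Suppose real symmetric $P,Q>0$ satisfy \eqref{eq:ric}. Since $Q+PBQ^{-1}B^TP\ge Q>0$, we get $A^TP+PA<-Q<0$, so $A$ is Hurwitz by Lyapunov's theorem. Applying the bounded real lemma with this $Q$ then gives \eqref{eq:metric}.

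A direct argument is also available if one prefers not to invoke the full lemma. Pre-multiply the LMI \eqref{eq:lmi} by $[\,((i\omega I-A)^{-1}Bu)^*\;\;u^*\,]$ and post-multiply by its conjugate transpose. Writing $x=(i\omega I-A)^{-1}Bu$, the $P$-terms cancel, because
\[
x^*(A^TP+PA)x+2\,\mathrm{Re}\,x^*PBu=2\,\mathrm{Re}\,x^*P(Ax+Bu)=2\,\mathrm{Re}\,x^*P(i\omega x)=0.
\]
What remains is $x^*Qx-u^*Qu<0$ for $u\ne0$. Compactness of the one-point compactification, together with $G_{A,B}(\omega)\to0$ as $|\omega|\to\infty$ and a uniform margin from strictness, upgrades this pointwise bound to the strict supremum bound. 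The margin can be obtained by replacing $Q$ with $(1-\epsilon)Q$ on the $-Q$ block, which the strict LMI tolerates.

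\emph{Sufficiency.} Suppose $A$ is Hurwitz and $Q>0$ satisfies \eqref{eq:metric}. Then $(A,\tilde B,\tilde C)$ has $H_\infty$ norm less than $1$. The strict bounded real lemma supplies $P>0$ with the strict Riccati inequality above, and undoing the substitution gives \eqref{eq:ric}. The solution is real because $A,\tilde B,\tilde C$ are real; one can take the stabilizing solution of the associated Riccati equation and perturb it.

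The main obstacle is this sufficiency direction, namely producing a strict $P>0$ from the frequency-domain bound. The $P$ comes from the Hamiltonian matrix
\[
\begin{bmatrix} A & \tilde B\tilde B^T\\ -\tilde C^T\tilde C & -A^T\end{bmatrix}
\]
having no imaginary-axis eigenvalues. Positive definiteness of $P$ needs observability or strictness, and here that is automatic because $\tilde C=Q^{1/2}$ is invertible. I would cite the strict bounded real lemma in its standard form rather than reprove it, and only verify its hypotheses: $A$ Hurwitz, $D=0$, and a real realization.
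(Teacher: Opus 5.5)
Your proposal is correct and follows essentially the same route as the paper: deduce that $A$ is Hurwitz from $A^TP+PA<0$, then recognize \eqref{eq:ric} with $Q$ fixed as the strict bounded real inequality for the realization $(A,BQ^{-1/2},Q^{1/2},0)$, and invoke the strict bounded real lemma in both directions. Your direct KYP-style check of necessity is a valid extra the paper does not include; there the $\epsilon$-margin alone already gives $\sup_\omega\|Q^{1/2}G_{A,B}(\omega)Q^{-1/2}\|_2^2\le 1-\epsilon$, so the compactness step is not needed.
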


\begin{proof}
If \eqref{eq:ric} holds, then $A^TP+PA<0$, hence $A$ is Hurwitz.  For fixed $Q>0$, \eqref{eq:ric} is the strict bounded real inequality for the realization
\[
\bigl(A,BQ^{-1/2},Q^{1/2},0\bigr),
\]
whose transfer matrix is $Q^{1/2}(sI-A)^{-1}BQ^{-1/2}$.  The strict bounded real lemma \cite[Cor.~13.24]{ZhouDoyleGlover1996} therefore gives \eqref{eq:metric}, and conversely.
\end{proof}

This paper is organized as follows:  In Section~2 we construct, for an arbitrary compact family of matrices, a scalar invariant defined by finite balances of transformed covariances and prove that it equals the optimal common ellipsoidal norm.  In Section~3, we apply the construction to the resolvent family and characterize Riccati stability by the Hurwitz property of $A$ and one scalar inequality, with failure exhibited by finitely many frequencies and directions.  In Section~4, we evaluate the invariant exactly in a two-dimensional example, and finally in Section~5 we discuss computation and an independent operator-space description of the same quantity.

\section{The covariance radius}

From this point onward, complex matrices and vectors enter explicitly.  For a complex matrix $X$, we write $X^H$ for its Hermitian (conjugate) transpose; when $X$ is real, $X^H=X^T$.

The elimination achieved by Lyapunov's theorem can be read through rank-one identities.  If $Xz=\lambda z$ with $|\lambda|=r(X)$, then
\[
Xzz^HX^H=|\lambda|^2zz^H=r(X)^2\,zz^H,
\]
so a single direction already exhibits amplification of its covariance by the factor $r(X)^2$, with no metric involved.  The classical identity $\inf_{H>0}\|H^{1/2}XH^{-1/2}\|_2=r(X)$ states that no metric can improve on this factor, so for one matrix the best covariance amplification and the best ellipsoidal norm coincide.  For a family of matrices a single direction no longer suffices, because the direction that one member amplifies may be handled cheaply by a metric adapted to the others.  The correct generalization permits finitely many members of the family to act on finitely many directions and compares aggregate covariances.  Throughout this section, let $\Sigma\subset\mathbb C^{n\times n}$ be compact and contain zero.

\begin{definition}
\label{def:cov}
Define
\begin{equation}\label{eq:covdef}
\rho_{\rm cov}(\Sigma)^2
=
\sup\left\{
 t\ge0:
 \sum_{j=1}^m X_jz_jz_j^HX_j^H
 \succeq
 t\sum_{j=1}^m z_jz_j^H
\right\},
\end{equation}
where the supremum is taken jointly over
\[
1\le m\le n^2+1,
\qquad X_j\in\Sigma,
\qquad z_j\in\mathbb C^n,
\]
with the $z_j$ not all zero.
\end{definition}

The definition contains no common metric or similarity transformation.  It records how much aggregate covariance can be forced by finitely many matrix--direction pairs.  The restriction $m\le n^2+1$ reflects the dimension $n^2$ of the real vector space of Hermitian matrices, and Theorem~\ref{thm:covdual} shows that larger supports are never needed.  Taking traces shows that every feasible $t$ satisfies $t\le\max_{X\in\Sigma}\|X\|_2^2$, so the supremum is finite.  By homogeneity we may normalize $\sum_j\|z_j\|_2^2=1$ and pad with zero vectors to $m=n^2+1$; compactness then shows that the supremum is attained.

\begin{theorem}
\label{thm:covdual}
For every compact $\Sigma\subset\mathbb C^{n\times n}$ containing zero,
\begin{equation}
\label{eq:covdual}
\rho_{\rm cov}(\Sigma) = \inf_{H>0}\ \sup_{X\in\Sigma} \left\|H^{1/2}XH^{-1/2}\right\|_2,
\end{equation}
where the infimum is over Hermitian positive definite matrices $H$.
\end{theorem}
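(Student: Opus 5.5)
We prove the two inequalities separately. Write $\rho_{\rm ell}(\Sigma)$ for the right-hand side of \eqref{eq:covdual}.

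\emph{Weak direction, $\rho_{\rm cov}\le\rho_{\rm ell}$.} Fix $H>0$ and put $s=\sup_{X\in\Sigma}\|H^{1/2}XH^{-1/2}\|_2$. Then $X^HHX\preceq s^2H$ for every $X\in\Sigma$. Suppose $\sum_j X_jz_jz_j^HX_j^H\succeq t\sum_j z_jz_j^H$ with the $z_j$ not all zero. Pair both sides with $H$ under the trace inner product:
\[
t\sum_j z_j^HHz_j\le\sum_j z_j^HX_j^HHX_jz_j\le s^2\sum_j z_j^HHz_j .
\]
The sum $\sum_j z_j^HHz_j$ is positive, so $t\le s^2$. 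Taking the supremum over feasible $t$ and the infimum over $H$ gives the inequality.

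\emph{Strong direction, $\rho_{\rm ell}\le\rho_{\rm cov}$.} Set $\rho=\rho_{\rm cov}(\Sigma)$ and fix $\gamma>\rho$. Define the convex cone
\[
\mathcal K=\operatorname{cone}\{\,Xzz^HX^H-\gamma^2zz^H : X\in\Sigma,\ z\in\mathbb C^n\,\}
\]
in the $n^2$-dimensional real space of Hermitian matrices. By Carathéodory's theorem for cones, every element of $\mathcal K$ is a sum of at most $n^2$ generators, and the bound $m\le n^2+1$ in Definition~\ref{def:cov} covers this. Hence if $\mathcal K$ met the cone $\mathcal P$ of positive semidefinite matrices at a point $\sum_j(\cdots)\succeq0$ with the $z_j$ not all zero, we would have $\rho^2\ge\gamma^2$, a contradiction. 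A degenerate term with $Xz=0$ contributes only $-\gamma^2zz^H$, which is consistent with this argument, and the case $z=0$ is trivial. So $\mathcal K\cap\mathcal P=\{0\}$.

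Next we need $\mathcal K$ closed. Normalize generators by $\|z\|=1$; the base set is then compact by compactness of $\Sigma$. Its convex hull is compact and, by the previous paragraph, does not contain $0$, since every element of the hull is a nonzero sum of generators whose trace is at most $\bigl(\max_{X\in\Sigma}\|X\|_2^2-\gamma^2\bigr)$ times total weight, and so on. Hence the generated cone is closed.

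Now separate the closed cone $\mathcal K$ from the compact base $\{W\succeq0:\operatorname{tr}W=1\}$ of $\mathcal P$. This gives a Hermitian $H$ with $\langle H,W\rangle>0$ for all nonzero $W\succeq0$, which means $H>0$. It also gives $\langle H,K\rangle\le0$ on $\mathcal K$, that is, $z^HX^HHXz\le\gamma^2z^HHz$ for all $X\in\Sigma$ and all $z$. This says $\|H^{1/2}XH^{-1/2}\|_2\le\gamma$ for every $X\in\Sigma$. Letting $\gamma\downarrow\rho$ gives $\rho_{\rm ell}\le\rho$.

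\emph{Main obstacle.} The delicate step is the separation: we must turn the disjointness $\mathcal K\cap\mathcal P=\{0\}$ into a \emph{strictly} positive definite $H$, rather than merely a semidefinite one. This needs closedness of $\mathcal K$ and a strict separation. Our first approach is to separate the compact convex set $\operatorname{conv}\{\text{normalized generators}\}$ from $\mathcal P$, or equivalently to apply a theorem of the alternative of Gordan/Farkas type in the finite-dimensional space of Hermitian matrices. The assumption $0\in\Sigma$ supplies the generators $-\gamma^2zz^H$. These show that $-\mathcal P\subseteq\mathcal K$, which keeps the geometry nondegenerate. If strictness fails, we fall back on perturbing $\gamma$, which yields a semidefinite $H$, and then replace $H$ by $H+\epsilon I$ at the cost of an arbitrarily small increase in the bound.
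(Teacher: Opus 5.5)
Your strategy works, and it is essentially the contrapositive of the paper's. The paper fixes $t$ below the square of the right side of \eqref{eq:covdual} and argues by contradiction: if $0\notin\operatorname{conv}\{tzz^H-Xzz^HX^H:\|z\|_2=1\}$, strict separation of the origin from that compact set (with the $X=0$ members forcing $H>0$) would give a metric beating $t$. Hence $0$ lies in the hull, and Carath\'eodory turns this into a feasible balance. You instead fix $\gamma>\rho_{\rm cov}(\Sigma)$ and build $H$ directly by separating the cone $\mathcal K$ from the base of the positive semidefinite cone. That separation is fine once $\mathcal K$ is closed, and it does deliver a strictly definite $H$.

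The step that is not justified as written is the closedness, i.e.\ the claim $0\notin\operatorname{conv}B$, where $B$ is the set of generators with $\|z\|_2=1$.
\begin{itemize}
\item Your trace bound proves this only when $\gamma>\max_{X\in\Sigma}\|X\|_2$, which is exactly the regime you do not care about.
\item $\mathcal K\cap\mathcal P=\{0\}$ does not by itself exclude a nontrivial sum of generators equal to zero. Your conic Carath\'eodory reduction says nothing at the point $0$, because its short representation there is the empty one.
\end{itemize}
The correct argument is one line. If $0=\sum_j\lambda_jb_j$ is a convex combination of elements of $B$, Carath\'eodory's theorem for convex hulls in the $n^2$-dimensional space of Hermitian matrices lets you use at most $n^2+1$ terms. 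Replacing $z_j$ by $\sqrt{\lambda_j}\,z_j$ then gives $\sum_jX_jz_jz_j^HX_j^H=\gamma^2\sum_jz_jz_j^H$, so $\gamma^2$ is feasible, contradicting $\gamma>\rho_{\rm cov}(\Sigma)$. Equivalently, run your previous paragraph at some $\gamma'\in(\rho_{\rm cov}(\Sigma),\gamma)$. This is where the $+1$ in $n^2+1$ is actually needed, and it is the core step of the paper's proof.

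Once you have $0\notin\operatorname{conv}B$, the closed cone and the semidefinite base are unnecessary. Strictly separating $0$ from $\operatorname{conv}B$ and evaluating on the generators $-\gamma^2zz^H$ (from $X=0$) already gives $H>0$ with $X^HHX\preceq\gamma^2H-\varepsilon I$; this is the paper's argument. What your version buys is that positivity of $H$ comes from the semidefinite base rather than from $0\in\Sigma$, so that hypothesis is never used. This is harmless either way, since adjoining $0$ to $\Sigma$ changes neither side of \eqref{eq:covdual}.

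Finally, drop the fallback in your last paragraph. A positive semidefinite $H$ with $X^HHX\preceq\gamma^2H$ carries no information on its kernel: $H=0$ satisfies this for every $\gamma$, and $H+\epsilon I$ then yields nothing better than $\max_{X\in\Sigma}\|X\|_2$. Fortunately your strict separation already produces a definite $H$, so the fallback is never invoked.
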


\begin{proof}
Let the right side of \eqref{eq:covdual} be $\alpha$.  If
\[
\sum_jX_jz_jz_j^HX_j^H\succeq t\sum_jz_jz_j^H,
\]
then for any $H>0$, with
\[
\gamma_H=\sup_{X\in\Sigma}\|H^{1/2}XH^{-1/2}\|_2,
\]
we have $X^HHX\preceq\gamma_H^2H$.  Taking the trace of the covariance inequality against $H$ gives $t\le\gamma_H^2$.  Hence $\rho_{\rm cov}(\Sigma)\le\alpha$.

Conversely, fix $0<t<\alpha^2$ and consider
\[
\mathcal C_t=
\{tzz^H-Xzz^HX^H:X\in\Sigma,\ \|z\|_2=1\},
\]
a compact set whose convex hull is compact as well.  If $0\notin\operatorname{conv}\mathcal C_t$, strict separation gives a Hermitian $H$ and $\varepsilon>0$ such that
\[
\operatorname{tr}\!\left[H(tzz^H-Xzz^HX^H)\right]\ge\varepsilon
\]
for every $X\in\Sigma$ and unit $z$.  Taking $X=0$ shows $H>0$, while the same inequality gives
\[
X^HHX\preceq tH-\varepsilon I<tH
\qquad(X\in\Sigma),
\]
so that $\sup_{X\in\Sigma}\|H^{1/2}XH^{-1/2}\|_2^2<t$, contradicting $t<\alpha^2$.  Thus $0\in\operatorname{conv}\mathcal C_t$.

The real vector space of Hermitian $n\times n$ matrices has dimension $n^2$.  Carath\'eodory's theorem therefore gives $m\le n^2+1$, matrices $X_j\in\Sigma$, unit vectors $u_j$, and $a_j\ge0$ with $\sum_j a_j=1$ such that
\[
\sum_j a_j(tu_ju_j^H-X_ju_ju_j^HX_j^H)=0.
\]
Setting $z_j=\sqrt{a_j}\,u_j$ shows that $t$ is feasible in \eqref{eq:covdef}.  Since every $t<\alpha^2$ is feasible, $\rho_{\rm cov}(\Sigma)\ge\alpha$.
\end{proof}

The proof exhibits metrics and covariance balances as dual objects.  A metric under which the family contracts by the factor $\sqrt t$ separates the origin from $\mathcal C_t$, and when no such metric exists the origin lies in the convex hull of $\mathcal C_t$, which is a finite balance of transformed covariances.  The optimal ellipsoidal norm is therefore not an upper bound produced by one fortunate choice of $H$ but the exact value of the best finite balance.

\begin{remark}\label{rem:single}
For a single matrix $X$, Theorem~\ref{thm:covdual} reduces to the classical identity
\[
\rho_{\rm cov}(\{0,X\})
=
\inf_{H>0}\|H^{1/2}XH^{-1/2}\|_2
=r(X).
\]
Thus the covariance radius extends the ordinary spectral radius from one matrix to the problem of finding one quadratic metric for an entire family.
\end{remark}

A chain of directions extends the eigenvector construction from one matrix to products, and shows that finite balances already reach every spectral quantity of the family.

\begin{lemma}\label{lem:word}
For any $X_1,\ldots,X_k\in\Sigma$,
\[
\rho_{\rm cov}(\Sigma)\ \ge\ r(X_1\cdots X_k)^{1/k}.
\]
\end{lemma}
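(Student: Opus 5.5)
Two routes are available. The quickest uses Theorem~\ref{thm:covdual}: for any $H>0$ let $\gamma_H=\sup_{X\in\Sigma}\|H^{1/2}XH^{-1/2}\|_2$. Then
\[
r(X_1\cdots X_k)=r\bigl(H^{1/2}X_1\cdots X_kH^{-1/2}\bigr)\le\prod_{i=1}^k\|H^{1/2}X_iH^{-1/2}\|_2\le\gamma_H^k .
\]
Taking the infimum over $H$ gives $r(X_1\cdots X_k)^{1/k}\le\alpha=\rho_{\rm cov}(\Sigma)$. This already proves the lemma.

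The text before the statement, however, promises a direct "chain of directions" construction, and I would present that as the main proof so the finite balance appears explicitly. Let $Y=X_1\cdots X_k$ and choose $v$ with $Yv=\lambda v$ and $|\lambda|=r(Y)=:\rho^k$. If $\rho=0$ there is nothing to prove, since $\rho_{\rm cov}\ge0$. Otherwise define the chain
\[
w_k=v,\qquad w_{i-1}=X_iw_i \quad (i=k,\dots,1),
\]
so that $w_0=X_1\cdots X_kv=\lambda v$. Every $w_i$ is nonzero, because $\lambda\neq0$. Now take the $k$ pairs $(X_i,z_i)$ with $z_i=c_iw_i$, where the weights $c_i$ are chosen so that the telescoping works. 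We need
\[
\sum_i|c_i|^2X_iw_iw_i^HX_i^H=\sum_i|c_i|^2w_{i-1}w_{i-1}^H
\]
to dominate $t\sum_i|c_i|^2w_iw_i^H$. Setting $|c_i|^2=\rho^{-2(k-i)}$, index $i$ on the left gives $\rho^{-2(k-i)}w_{i-1}w_{i-1}^H$, and we compare this with $t\,\rho^{-2(k-i+1)}w_{i-1}w_{i-1}^H$ coming from the right. With $t=\rho^2$ these terms match for $i-1=1,\dots,k-1$. The leftover terms are $w_0w_0^H=\rho^{2k}vv^H$ on the left and $t\,\rho^{-2(k-1)}... $; more precisely, the right-hand term with $i=k$ is $\rho^{2}\cdot\rho^{0}\,vv^H$, while the left-hand term with $i=1$ is $\rho^{-2(k-1)}\rho^{2k}vv^H=\rho^2vv^H$. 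These are equal, so the balance holds with equality for $t=\rho^2$.

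The one remaining obstacle is the support bound $m\le n^2+1$, which may fail when $k>n^2+1$. Here I would cite the Carath\'eodory step from the proof of Theorem~\ref{thm:covdual}: a feasible balance of any length shows that $0\in\operatorname{conv}\mathcal C_t$, and Carath\'eodory then reduces it to at most $n^2+1$ terms. Alternatively, one can simply fall back on the duality argument of the first paragraph.
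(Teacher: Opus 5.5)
Your proof is correct. Your main argument is the paper's chain construction: the same chain $w_k=v$, $w_{i-1}=X_iw_i$, the same $t=|\lambda|^{2/k}$, and weights $|c_i|^2=\rho^{-2(k-i)}$ that differ from the paper's $t^{i-1}$ only by the constant factor $\rho^{-2(k-1)}$. Tidy up the garbled sentence about the leftover terms; the final comparison $\rho^{-2(k-1)}\rho^{2k}=\rho^2$ is right.

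The real differences lie in how you pass to $\rho_{\rm cov}$ when $k>n^2+1$. The paper pairs the long balance with an arbitrary $H>0$ to get $t\le\gamma_H^2$. It then invokes the equality in Theorem~\ref{thm:covdual}, including its separation half. That pairing step yields exactly the bound $r(W)\le\prod_i\|H^{1/2}X_iH^{-1/2}\|_2\le\gamma_H^k$ that your first route obtains from submultiplicativity, so the paper's proof and your quick route rest on the same duality.

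Your Carath\'eodory finish is different in substance. The chain balance is an exact equality with every $z_i\neq0$, so dividing by $\sum_i\|z_i\|_2^2$ exhibits $0$ as a convex combination of $k$ points of $\mathcal C_t$. Carath\'eodory's theorem alone then cuts this to at most $n^2+1$ terms, making $t$ feasible in Definition~\ref{def:cov} directly. That version needs neither separation nor Theorem~\ref{thm:covdual}, and it produces an explicit certificate of admissible size, which is what the claim that finite balances reach every spectral quantity actually promises. The submultiplicativity route is the shortest, but it produces no balance at all.
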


\begin{proof}
Set $W=X_1\cdots X_k$ and assume $r(W)>0$, the bound being trivial otherwise.  Choose $Wv=\mu v$ with $|\mu|=r(W)$ and $v\ne0$, put $v_k=v$, and define $v_{j-1}=X_jv_j$ for $j=k,\ldots,1$, so that $v_0=Wv=\mu v_k$ and no $v_j$ vanishes.  With $t=|\mu|^{2/k}$ and $z_j=t^{(j-1)/2}v_j$,
\[
\sum_{j=1}^kX_jz_jz_j^HX_j^H
=\sum_{j=1}^kt^{j-1}v_{j-1}v_{j-1}^H
=|\mu|^2v_kv_k^H+\sum_{j=1}^{k-1}t^{j}v_{j}v_{j}^H
=t\sum_{j=1}^kt^{j-1}v_jv_j^H
=t\sum_{j=1}^kz_jz_j^H,
\]
using $|\mu|^2=t^k$ in the third equality.  Pairing this balance with an arbitrary $H\succ0$ as in the first half of the proof of Theorem~\ref{thm:covdual} gives $t\le\sup_{X\in\Sigma}\|H^{1/2}XH^{-1/2}\|_2^2$, an argument that places no restriction on the number of terms, and taking the infimum over $H$ gives $t\le\rho_{\rm cov}(\Sigma)^2$.
\end{proof}

At $k=1$ the lemma gives $\sup_{X\in\Sigma}r(X)\le\rho_{\rm cov}(\Sigma)$.  Dual positive-semidefinite constructions of this type are classical in common quadratic Lyapunov theory; see \cite{KamenetskiyPyatnitskiy1987,MoldovanGowda2010}.  The present contribution is the scalar formulation \eqref{eq:covdef} and its finite support bound.

\section{Main characterization and bounds}

For Hurwitz $A$, let
\[
\Sigma_{A,B}
=
\{G_{A,B}(\omega):\omega\in\mathbb R\}\cup\{0\}.
\]
Because $G_{A,B}(\omega)\to0$ as $|\omega|\to\infty$, this set is compact.  Define
\begin{equation}\label{eq:rhoR}
\rho_R(A,B)=\rho_{\rm cov}(\Sigma_{A,B}).
\end{equation}

\begin{theorem}
\label{thm:main}
For real $A,B\in\mathbb R^{n\times n}$,
\begin{equation}\label{eq:main}
(A,B)\text{ is Riccati-stable}
\quad\Longleftrightarrow\quad
A\text{ is Hurwitz and }\rho_R(A,B)<1.
\end{equation}
\end{theorem}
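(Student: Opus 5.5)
The plan is to chain Lemma~\ref{lem:brl} with Theorem~\ref{thm:covdual}, applied to the compact set $\Sigma_{A,B}$. Lemma~\ref{lem:brl} already reduces Riccati stability to ``$A$ Hurwitz and some real symmetric $Q>0$ makes the resolvent family a uniform strict contraction.'' Theorem~\ref{thm:covdual} identifies $\rho_R(A,B)$ with $\inf_{H>0}\sup_{X\in\Sigma_{A,B}}\|H^{1/2}XH^{-1/2}\|_2$. The only work is to match the two optimization problems. They differ in two ways: a strict ``$<1$'' for one $Q$ versus an infimum, and real symmetric $Q$ versus complex Hermitian $H$.

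For the forward direction, assume $(A,B)$ is Riccati-stable. Lemma~\ref{lem:brl} gives that $A$ is Hurwitz and gives a real symmetric $Q>0$ with $\gamma_Q:=\sup_\omega\|Q^{1/2}G_{A,B}(\omega)Q^{-1/2}\|_2<1$. Adjoining $X=0$ does not change this supremum. Since $Q$ is in particular Hermitian positive definite, Theorem~\ref{thm:covdual} gives $\rho_R(A,B)\le\gamma_Q<1$.

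For the converse, assume $A$ is Hurwitz and $\rho_R(A,B)<1$. By Theorem~\ref{thm:covdual} there is a Hermitian $H>0$ and some $\gamma<1$ with $\sup_{X\in\Sigma_{A,B}}\|H^{1/2}XH^{-1/2}\|_2\le\gamma$, that is,
\[
G(\omega)^HHG(\omega)\preceq\gamma^2H \qquad\text{for all }\omega.
\]
Strictness is automatic here: the infimum being below $1$ means some $H$ already achieves a value below $1$.

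The main obstacle is that $H$ may be genuinely complex, while Lemma~\ref{lem:brl} needs a real symmetric $Q$. I would handle this by conjugate symmetry. Because $A$ and $B$ are real, $\overline{G(\omega)}=G(-\omega)$. Conjugating the inequality above therefore gives
\[
G(-\omega)^H\,\overline H\,G(-\omega)\preceq\gamma^2\overline H \qquad\text{for all }\omega,
\]
so $\overline H$ is also a valid metric for the whole family. The constraint $X^HHX\preceq\gamma^2H$ is linear in $H$, so the average $Q=\tfrac12(H+\overline H)=\operatorname{Re}H$ satisfies it too. This $Q$ is real symmetric and positive definite. Then $\sup_\omega\|Q^{1/2}G(\omega)Q^{-1/2}\|_2\le\gamma<1$, and Lemma~\ref{lem:brl} yields Riccati stability. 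This closes the equivalence \eqref{eq:main}.
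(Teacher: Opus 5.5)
Your proposal is correct and takes essentially the same route as the paper: combine Theorem~\ref{thm:covdual} with Lemma~\ref{lem:brl}, then replace a possibly complex Hermitian metric $H$ by the real symmetric average $Q=\tfrac12(H+\overline H)$, using $G_{A,B}(-\omega)=\overline{G_{A,B}(\omega)}$. You also note that the constraint $X^HHX\preceq\gamma^2H$ is linear in $H$, which spells out the justification for the averaging step that the paper leaves implicit.
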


\begin{proof}
By Theorem~\ref{thm:covdual},
\[
\rho_R(A,B)
=
\inf_{H>0}\sup_\omega
\|H^{1/2}G_{A,B}(\omega)H^{-1/2}\|_2.
\]
For real $A,B$, $G_{A,B}(-\omega)=\overline{G_{A,B}(\omega)}$.  Hence if a Hermitian $H>0$ gives a strict bound $\gamma$, then so does $\overline H$ (evaluate the bound at $-\omega$ and take complex conjugates), and their average
\[
Q=\tfrac12(H+\overline H)
\]
is real symmetric positive definite and gives the same strict bound.  Thus the infimum may be taken over real symmetric $Q>0$.  Lemma~\ref{lem:brl} now gives \eqref{eq:main}.
\end{proof}

Theorem~\ref{thm:main} is the requested analogue of the Lyapunov theorem.  The Hurwitz condition disposes of the delay-free dynamics, the scalar inequality $\rho_R(A,B)<1$ replaces the existential condition over pairs $(P,Q)$, and both conditions refer only to quantities generated by $A$ and $B$.

\begin{corollary}
\label{cor:finitecert}
For real $A,B\in\mathbb R^{n\times n}$, the pair $(A,B)$ is not Riccati-stable if and only if either $A$ is not Hurwitz or there exist $m\le n^2+1$, frequencies $\omega_1,\ldots,\omega_m\in\mathbb R$, and vectors $z_1,\ldots,z_m\in\mathbb C^n$, not all zero, such that
\[
\sum_{j=1}^m G_{A,B}(\omega_j)z_jz_j^HG_{A,B}(\omega_j)^H
\succeq
\sum_{j=1}^m z_jz_j^H.
\]
\end{corollary}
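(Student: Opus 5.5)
The plan is to negate Theorem~\ref{thm:main} and unpack $\rho_R(A,B)\ge1$ using Definition~\ref{def:cov} together with the attainment remark after it. By Theorem~\ref{thm:main}, $(A,B)$ fails to be Riccati-stable exactly when $A$ is not Hurwitz or $\rho_R(A,B)\ge1$. When $A$ is not Hurwitz both sides of the corollary hold trivially, so I will assume $A$ is Hurwitz. Then $\Sigma_{A,B}$ is compact and contains zero, and what remains is to show that $\rho_{\rm cov}(\Sigma_{A,B})\ge1$ holds if and only if a balance of the displayed form exists with every $X_j$ of the form $G_{A,B}(\omega_j)$.

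For sufficiency, suppose such frequencies and vectors are given. Then $t=1$ is feasible in \eqref{eq:covdef}, since each $G_{A,B}(\omega_j)\in\Sigma_{A,B}$ and $m\le n^2+1$. Hence $\rho_R(A,B)^2\ge1$, and Theorem~\ref{thm:main} shows the pair is not Riccati-stable.

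For necessity, suppose $\rho_R(A,B)\ge1$. The remark after Definition~\ref{def:cov} says the supremum is attained. Concretely, normalize $\sum_j\|z_j\|^2=1$ and pad to $m=n^2+1$. The feasible set of triples $(X_j,z_j,t)$ is closed in a compact set, because $\Sigma_{A,B}$ is compact, the unit-sum constraint on the $z_j$ is compact, and $t$ is bounded. This gives a balance at level $t^*=\rho_R^2\ge1$. Since $t^*\sum_j z_jz_j^H\succeq\sum_j z_jz_j^H$, the same data satisfy the inequality at level $1$.

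The one remaining issue, and the main obstacle, is that some chosen $X_j$ might equal the added point $0$ rather than $G_{A,B}(\omega)$ for a real $\omega$. I would handle this by deleting those terms. Deleting a term with $X_j=0$ removes nothing from the left side and subtracts $z_jz_j^H\succeq0$ from the right side, so the inequality survives. The remaining $z_j$ cannot all be zero: otherwise the left side is $0$ while the right side is $\sum z_jz_j^H\ne0$, which contradicts the inequality after taking traces. The surviving terms therefore give at most $n^2+1$ real frequencies $\omega_j$ and vectors, not all zero, satisfying the displayed inequality, which completes the equivalence.
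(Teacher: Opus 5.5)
Your proof is correct and follows the paper's argument essentially step for step. Like the paper, you negate Theorem~\ref{thm:main}, use attainment of the supremum in Definition~\ref{def:cov} at level $t=\rho_R(A,B)^2\ge1$, discard the terms with $X_j=0$, and get the converse directly from feasibility of $t=1$; your trace argument that the surviving $z_j$ are not all zero fills in a detail the paper leaves implicit.
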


\begin{proof}
If $A$ is Hurwitz and $(A,B)$ is not Riccati-stable, Theorem~\ref{thm:main} gives $\rho_R(A,B)\ge1$.  Since the supremum in Definition~\ref{def:cov} is attained, there exist maximizing $X_j$ and $z_j$ with $t=\rho_R(A,B)^2\ge1$, and these satisfy the displayed inequality.  Any terms with $X_j=0$ may be discarded, leaving $X_j=G_{A,B}(\omega_j)$ at finite real frequencies.  The converse follows immediately from Definition~\ref{def:cov} and Theorem~\ref{thm:main}.
\end{proof}

Corollary~\ref{cor:finitecert} is the counterpart of exhibiting an eigenvalue in the closed right half-plane.  Failure of Riccati stability always reduces to a single semidefinite inequality in at most $n^2+1$ frequencies and directions, so both directions of the characterization are expressed in data generated by the pair.

The same variational formula places $\rho_R$ between two familiar frequency-domain quantities.  Lemma~\ref{lem:word} with $k=1$ gives the pointwise lower bound, while the identity metric gives the unscaled upper bound.

\begin{proposition}
\label{prop:bounds}
For Hurwitz $A$,
\begin{equation}\label{eq:sandwich}
\sup_{\omega\in\mathbb R}r(G_{A,B}(\omega))
\le
\rho_R(A,B)
\le
\sup_{\omega\in\mathbb R}\|G_{A,B}(\omega)\|_2.
\end{equation}
More generally, for every finite choice of frequencies $\omega_1,\ldots,\omega_k$,
\begin{equation}\label{eq:resolventword}
r\!\left(G_{A,B}(\omega_1)\cdots G_{A,B}(\omega_k)\right)^{1/k}
\le\rho_R(A,B).
\end{equation}
\end{proposition}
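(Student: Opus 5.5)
The plan is to read all three inequalities off the variational formula of Theorem~\ref{thm:covdual} together with Lemma~\ref{lem:word}, applied to the compact set $\Sigma_{A,B}$, which contains zero by construction. Since $A$ is Hurwitz, $\Sigma_{A,B}$ is compact and contains $0$, so both results apply with $\Sigma=\Sigma_{A,B}$ and $\rho_{\rm cov}(\Sigma_{A,B})=\rho_R(A,B)$.

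\textbf{General word bound.} First prove \eqref{eq:resolventword}. Given frequencies $\omega_1,\ldots,\omega_k$, set $X_j=G_{A,B}(\omega_j)\in\Sigma_{A,B}$. Lemma~\ref{lem:word} then gives
\[
r\bigl(G_{A,B}(\omega_1)\cdots G_{A,B}(\omega_k)\bigr)^{1/k}\le\rho_{\rm cov}(\Sigma_{A,B})=\rho_R(A,B)
\]
directly. Note that this lemma places no restriction on $k$, because its proof pairs the balance with an arbitrary $H$ rather than appealing to the $m\le n^2+1$ support bound. Taking $k=1$ and the supremum over $\omega$ yields the left inequality of \eqref{eq:sandwich}.

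\textbf{Upper bound.} For the right inequality of \eqref{eq:sandwich}, use the formula
\[
\rho_R(A,B)=\inf_{H>0}\sup_{X\in\Sigma_{A,B}}\|H^{1/2}XH^{-1/2}\|_2
\]
and choose $H=I$. This bounds $\rho_R$ by $\sup_{X\in\Sigma_{A,B}}\|X\|_2$. The element $X=0$ contributes nothing, so this supremum equals $\sup_\omega\|G_{A,B}(\omega)\|_2$.

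No step is a genuine obstacle. The only points to check are that $\Sigma_{A,B}$ meets the hypotheses of the two earlier results (compactness and $0\in\Sigma_{A,B}$, both noted before Theorem~\ref{thm:main}), and that the adjoined zero matrix affects neither supremum.
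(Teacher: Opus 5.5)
Your proposal is correct and follows the paper's proof: Lemma~\ref{lem:word} applied to $\Sigma_{A,B}$ gives the word bound, its case $k=1$ gives the left inequality, and the choice $H=I$ in Theorem~\ref{thm:covdual} gives the right inequality. You also correctly make explicit the checks the paper leaves implicit: that $\Sigma_{A,B}$ is compact, that it contains $0$, and that the adjoined zero changes neither supremum.
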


\begin{proof}
The finite-product estimate is Lemma~\ref{lem:word} applied to the resolvent family $\Sigma_{A,B}$.  Its case $k=1$ gives the left inequality in \eqref{eq:sandwich}.  The right inequality follows from Theorem~\ref{thm:covdual} by choosing the identity metric $H=I$.
\end{proof}

The leftmost quantity in \eqref{eq:sandwich} governs strong delay-independent stability \cite{ChenLatchman1995}, while the rightmost quantity is the unscaled small-gain level.  Consequently
\[
\text{unscaled small gain}
\Longrightarrow
\text{Riccati stability}
\Longrightarrow
\text{strong delay-independent stability}.
\]
The example in the next section separates these quantities numerically.

\begin{remark}
Theorem~\ref{thm:main} recovers the partial results in \cite{Verriest2004}
as corollaries.  Since $G_{A,B}(0)=-A^{-1}B$, the lower bound in
\eqref{eq:sandwich} gives $r(A^{-1}B)\le\rho_R(A,B)$, so Riccati stability
forces $A^{-1}B$ to be Schur stable.  Moreover
$G_{\alpha A,\alpha B}(\omega)=G_{A,B}(\omega/\alpha)$ for $\alpha>0$, so
the resolvent family and hence $\rho_R$ are invariant under joint scaling,
and similarity invariance of $\rho_R$ follows from the variational formula
\eqref{eq:covdual}.  These sharpen the scaling and similarity lemmas of
\cite{Verriest2004} from preservation statements to exact invariance of
the characterizing scalar.
\end{remark}

\section{A two-dimensional example}

The example below serves three purposes.  The outer bounds in \eqref{eq:sandwich} are strict for this pair, the covariance radius is evaluated exactly through a particular metric, and a scaled version of the pair is strongly delay-independent stable without being Riccati-stable.

\begin{example}\label{ex:2d}
Let
\[
A=\begin{bmatrix}-3&3\\-3&-1\end{bmatrix},
\qquad
B=\begin{bmatrix}1&-\tfrac12\\-\tfrac12&-\tfrac12\end{bmatrix}.
\]
The eigenvalues of $A$ are $-2\pm2\sqrt2\,i$.  Numerically,
\[
\sup_\omega r(G_{A,B}(\omega))\approx0.272,
\qquad
\rho_R(A,B)\approx0.453,
\qquad
\sup_\omega\|G_{A,B}(\omega)\|_2\approx0.502.
\]
Thus $\sup_\omega r(G_{A,B}(\omega))<\rho_R(A,B)<\sup_\omega\|G_{A,B}(\omega)\|_2$.

For
\[
Q=\begin{bmatrix}28&-5\\-5&13\end{bmatrix},
\]
a direct calculation gives, for every real $\omega$,
\begin{equation}\label{eq:example_exact}
\left\|Q^{1/2}G_{A,B}(\omega)Q^{-1/2}\right\|_2^2
=
r\!\left(G_{A,B}(-\omega)G_{A,B}(\omega)\right).
\end{equation}
The left side gives an upper bound, since any single metric bounds the infimum from above, while Lemma~\ref{lem:word} at $k=2$, applied to the members $G_{A,B}(-\omega)$ and $G_{A,B}(\omega)$, gives the matching lower bound $r(G_{A,B}(-\omega)G_{A,B}(\omega))^{1/2}\le\rho_R(A,B)$ at every frequency.  Hence
\[
\rho_R(A,B)
=
\sup_{\omega\in\mathbb R}
r\!\left(G_{A,B}(-\omega)G_{A,B}(\omega)\right)^{1/2}
\approx0.45278,
\]
with the maximizing frequency near $|\omega|=2.8754$.  
By homogeneity, $\rho_R(A,cB)=|c|\rho_R(A,B)$.  Taking $c=5/2$ gives
\[
\sup_\omega r(G_{A,cB}(\omega))\approx0.679<1
<1.13\approx\rho_R(A,cB).
\]
Thus $(A,\tfrac52B)$ is strongly delay-independent stable but is not Riccati-stable.  The covariance radius therefore measures a genuine gap between pointwise spectral stability and the existence of one common quadratic metric.
\end{example}

\section{Discussion}

Theorem~\ref{thm:main} resolves Verriest's elimination problem with a scalar invariant whose definition contains neither of the positive-definite matrices in \eqref{eq:ric}.  Theorem~\ref{thm:covdual} shows that this intrinsic covariance quantity is exactly the optimal common ellipsoidal norm, while Carath\'eodory's theorem shows that every obstruction requires at most $n^2+1$ rank-one frequency--direction terms.  Numerically, $\rho_R$ can still be computed by bisection on $\gamma$, since
\[
\rho_R(A,B)<\gamma
\quad\Longleftrightarrow\quad
(A,B/\gamma)\text{ is Riccati-stable},
\]
with the LMI \eqref{eq:lmi} solved at each step.

There is also an operator-space interpretation.  Let $\mathcal E_{A,B}\subset C(\mathbb R\cup\{\infty\})$ be the span of the scalar entries of $G_{A,B}$, with its inherited matrix norms, and regard $G_{A,B}$ as an element of $M_n(\mathcal E_{A,B})$ through its entries.  Shalit and Shamovich \cite{ShalitShamovich2025} define an operator-space spectral radius whose similarity theorem gives, after scaling,
\[
\rho_{\mathcal E}(X)
=
\inf_{S\in GL_n(\mathbb C)}\|S^{-1}XS\|_{M_n(\mathcal E)}.
\]
For $X=G_{A,B}$, the matrix norm is $\sup_\omega\|\cdot\|_2$.  Taking $H=S^{-H}S^{-1}$ identifies the similarity norm with the corresponding quadratic-metric norm, so Theorem~\ref{thm:covdual} gives
\[
\rho_{\mathcal E_{A,B}}(G_{A,B})=\rho_R(A,B).
\]
Thus the recent operator-space radius and the finite covariance radius give two descriptions of the same scalar; the latter is sufficient for the proof above.  The two constructions arose independently and for different purposes, which indicates that the scalar is a natural invariant of the family rather than an artifact of the delay problem.

The covariance viewpoint also distinguishes Riccati stability from ordinary spectral tests.  The pointwise spectral radius involves one frequency at a time.  The finite-word bound in Lemma~\ref{lem:word} follows one chain of directions through finitely many resolvent values.  A covariance balance is more flexible: several frequency--direction pairs may act simultaneously.  This is precisely the additional obstruction created by requiring one common quadratic metric.  Natural questions are whether the general support bound $n^2+1$ can be reduced for resolvent families and when a finite-word lower bound is already exact.

\section*{Acknowledgments and disclosure of AI assistance}

The generative AI assistants Google Gemini, OpenAI ChatGPT, and Anthropic Claude were used in the development, drafting, and checking of this manuscript, including literature and priority searches and numerical verification.  These tools were not relied upon as authorities for correctness, and responsibility for all mathematical claims rests with the author.

\end{document}